\newtheorem{Theorem}{Theorem}[section]
\newtheorem{Proposition}[Theorem]{Proposition}
\newtheorem{Remark}{Remark}[section]
\newtheorem{Claim}{Claim}[section]
\begin{document}
\title{Asymptotic results for the last zero crossing time of a Brownian
motion with non-null drift\thanks{The support of GNAMPA-INdAM and of 
MIUR Excellence Department Project awarded to the Department of 
Mathematics, University of Rome Tor Vergata (CUP E83C18000100006) is
acknowledged.}}
\author{Francesco Iafrate\thanks{Dipartimento di Scienze Statistiche, 
Sapienza Universit\`a di Roma, Piazzale Aldo Moro 5, I-00185 Rome,
Italy. e-mail: \texttt{francesco.iafrate@uniroma1.it}} \and 
Claudio Macci\thanks{Dipartimento di Matematica, Universit\`a
di Roma Tor Vergata, Via della Ricerca Scientifica, I-00133 Rome,
Italy. e-mail: \texttt{macci@mat.uniroma2.it}}}
\date{}
\maketitle
\begin{abstract}
\noindent We consider the last zero crossing time $T_{\mu,t}$ of 
a Brownian motion, with drift $\mu\neq 0$, in the time interval $[0,t]$.
We prove the large deviation principle of $\{T_{\mu\sqrt{r},t}:r>0\}$
as $r$ tends to infinity. Moreover, motivated by the results on moderate
deviations in the literature, we also prove a class of large deviation
principles for the same random variables with different scalings, which
are governed by the same rate function. Finally we compare some aspects
of the classical moderate deviation results, and the results in this 
paper.\\
\ \\
\noindent\emph{Keywords}: large deviations, moderate deviations.\\
\noindent\emph{2000 Mathematical Subject Classification}: 60F10, 60J65.
\end{abstract}

\section{Introduction and preliminaries}
In this paper we consider a Brownian motion with drift, and we present some asymptotic 
results for the last zero crossing time (in a time interval $[0,t]$) as the drift tends
to infinity. In our proofs we handle some formulas presented in \cite{IafrateOrsingher},
and we refer to the theory of large deviations (see \cite{DemboZeitouni} as a reference
on this topic).

We recall some results in \cite{IafrateOrsingher}. We consider a Brownian motion 
(starting at the origin at time zero), with drift $\mu\neq 0$. Moreover, for some
$t>0$, we consider the last zero crossing time $T_{\mu,t}$ of this Brownian motion in 
the time interval $[0,t]$. Then (see Theorem 2.1 in \cite{IafrateOrsingher}) the 
distribution function of $T_{\mu,t}$ is defined by
\begin{equation}\label{eq:distribution-function}
P(T_{\mu,t}\leq a)=1-\frac{2}{\pi}\int_0^{\sqrt{\frac{t-a}{a}}}\frac{e^{-\frac{\mu^2}{2}a(1+y^2)}}{1+y^2}dy\ (\mbox{for}\ a\in[0,t]),
\end{equation}
and its density is
\begin{equation}\label{eq:density}
f_{\mu,t}(a):=\left(\frac{e^{-\frac{\mu^2}{2}t}}{\pi\sqrt{a(t-a)}}
+\frac{\mu^2}{2\pi}\int_a^t\frac{e^{-\frac{\mu^2}{2}y}}{\sqrt{a(y-a)}}dy\right)1_{[0,t]}(a).
\end{equation}
In this paper we also consider the probability that this Brownian motion crosses zero in 
the time interval $[a,b]$ for some $b>a>0$ (see the proof of Theorem 3.1 in 
\cite{IafrateOrsingher}):
\begin{equation}\label{eq:crossing-probability}
\Psi_{[a,b]}(\mu)=\frac{1}{\pi}\int_0^{b-a}\frac{e^{-\frac{\mu^2}{2}(a+s)}}{a+s}\sqrt{\frac{a}{s}}ds.
\end{equation}

We also recall the concept of large deviation principle (LDP for short) for a family of
random variables $\{W(r):r>0\}$ defined on the same probability space $(\Omega,\mathcal{F},P)$.
In view of what follows we assume that these random variables are real valued. Then $\{W(r):r>0\}$
satisfies the large deviation principle (LDP from now on) with speed $v_r$ and rate function $I$ 
if the following conditions hold: $\lim_{r\to\infty}v_r=\infty$, the function $I:\mathbb{R}\to [0,\infty]$ is 
lower semi-continuous;
\begin{equation}\label{eq:UB}
\limsup_{r\to\infty}\frac{1}{v_r}\log P(W(r)\in C)\leq-\inf_{w\in C}I(w)\ \mbox{for all closed sets}\ C;
\end{equation}
\begin{equation}\label{eq:LB}
\liminf_{r\to\infty}\frac{1}{v_r}\log P(W(r)\in G)\geq-\inf_{w\in G}I(w)\ \mbox{for all open sets}\ G.
\end{equation}
Moreover, a rate function $I$ is said to be good if all its level sets
$\{\{w\in\mathbb{R}:I(w)\leq\eta\}:\eta\geq 0\}$ are compact.

Now we present a list of the results proved in this paper.
\begin{itemize}
\item The LDP for $W(r)=T_{\mu\sqrt{r},t}$ with $v_r=r$ and a rate function $J$ 
(Proposition \ref{prop:LD}).
\item For every choice of positive numbers 
$\{\gamma_r:r>0\}$ such that
\begin{equation}\label{eq:conditions-MD}
\lim_{r\to\infty}\gamma_r=0\ \mbox{and}\ \lim_{r\to\infty}r\gamma_r=\infty,
\end{equation}
the LDP for $W(r)=r\gamma_rT_{\mu\sqrt{r},t}$ with $v_r=1/\gamma_r$ and the same
rate function $\tilde{J}$ (Proposition \ref{prop:MD}). It will be explained that
this class of LDPs is inspired by the results in the literature on moderate 
deviations.
\item We study the asymptotic behavior, as $r\to\infty$, for the probability in 
\eqref{eq:crossing-probability} with $\mu\sqrt{r}$ in place of $\mu$ (Proposition
\ref{prop:crossing-probability}). We also highlight some similar aspects with
asymptotic estimates in insurance literature (see Remark \ref{rem:analogies-with-ruin-probability}).
\end{itemize}

Throughout the paper, we consider the notation 
$$\varphi(x;\mu,\sigma^2)=\frac{e^{-\frac{(x-\mu)^2}{2\sigma^2}}}{\sqrt{2\pi\sigma^2}}$$
for the well-known density of a Normal distribution with mean $\mu$ and variance 
$\sigma^2$. 

We conclude with the outline of the paper. In Section \ref{sec:results}, we prove the results.
Finally, in Section \ref{sec:MD-comments}, we compare some aspects of the classical moderate
deviation results, and the results in this paper; in particular we discuss some common 
features and differences.

\section{Results}\label{sec:results}
We start with the first result, i.e. a LDP with $v_r=r$. We remark that, since we deal with random 
variables $\{W(r):r>0\}$ which take values on a compact interval $K$ of the real line (i.e. 
$K=[0,t]$), we can refer to a useful consequence of Theorem 4.1.11 in \cite{DemboZeitouni}. In fact,
we can say that we have the LDP if the two following conditions hold for all 
$w\in\mathbb{R}$:
\begin{equation}\label{eq:forLB}
\lim_{\varepsilon\to 0}\liminf_{r\to\infty}\frac{1}{r}\log P(W(r)\in (w-\varepsilon,w+\varepsilon))\geq-I(w);
\end{equation}
\begin{equation}\label{eq:forUB}
\lim_{\varepsilon\to 0}\limsup_{r\to\infty}\frac{1}{r}\log P(W(r)\in (w-\varepsilon,w+\varepsilon))\leq-I(w).
\end{equation}
Actually, when $w\notin K$, these two bounds can be easily checked with $I(w)=\infty$
(\eqref{eq:forLB} trivially holds; moreover, if we take $\varepsilon>0$ small enough to have 
$(w-\varepsilon,w+\varepsilon)\cap K=\emptyset$, we have $P(W(r)\in(w-\varepsilon,w+\varepsilon))=0$
for all $r>0$, which yields \eqref{eq:forUB}); so we can consider only the case $w\in K$.

\begin{Proposition}\label{prop:LD}
The family of random variables $\{T_{\mu\sqrt{r},t}:r>0\}$ satisfies the 
LDP with speed $v_r=r$ and good rate function $J$ defined by
$$J(b):=\left\{\begin{array}{ll}
\frac{\mu^2}{2}b&\ if\ b\in[0,t]\\
\infty&\ otherwise.
\end{array}\right.$$
\end{Proposition}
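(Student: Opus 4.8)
The plan is to verify the two local bounds \eqref{eq:forLB} and \eqref{eq:forUB} for every $w=b\in K=[0,t]$, since the reduction via Theorem 4.1.11 in \cite{DemboZeitouni} has already been set up and the case $w\notin K$ disposed of. The whole argument rests on the explicit complementary distribution function obtained from \eqref{eq:distribution-function} by replacing $\mu$ with $\mu\sqrt{r}$, namely
$$P(T_{\mu\sqrt{r},t}>a)=\frac{2}{\pi}\int_0^{\sqrt{\frac{t-a}{a}}}\frac{e^{-\frac{\mu^2r}{2}a(1+y^2)}}{1+y^2}\,dy\qquad(a\in(0,t)).$$
The key step is to establish the logarithmic tail asymptotic
$$\lim_{r\to\infty}\frac{1}{r}\log P(T_{\mu\sqrt{r},t}>a)=-\frac{\mu^2}{2}a\qquad\text{for all }a\in(0,t),$$
after which the local bounds follow from elementary manipulations.

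For the tail asymptotic I would use a Laplace-type estimate on the integral, whose exponent $-\frac{\mu^2r}{2}a(1+y^2)$ is maximal at the left endpoint $y=0$. For the lower bound, restricting the integration to $y\in[0,\delta]$ and using $1+y^2\leq 1+\delta^2$ there gives a factor $e^{-\frac{\mu^2r}{2}a(1+\delta^2)}$ times a positive constant, so that $\liminf_{r\to\infty}\frac{1}{r}\log P(T_{\mu\sqrt{r},t}>a)\geq-\frac{\mu^2}{2}a(1+\delta^2)$; letting $\delta\to0$ yields $\geq-\frac{\mu^2}{2}a$. For the upper bound, extending the range to $[0,\infty)$, dropping the harmless factor $1/(1+y^2)\leq 1$ and evaluating the resulting Gaussian integral $\int_0^\infty e^{-\frac{\mu^2r}{2}ay^2}\,dy=\frac{1}{2}\sqrt{2\pi/(\mu^2 r a)}$ gives $P(T_{\mu\sqrt{r},t}>a)\leq C\,r^{-1/2}e^{-\frac{\mu^2r}{2}a}$, hence $\limsup_{r\to\infty}\frac{1}{r}\log P(T_{\mu\sqrt{r},t}>a)\leq-\frac{\mu^2}{2}a$. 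The two bounds together give the claimed limit.

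To deduce the local estimates, fix $b\in(0,t)$ and $\varepsilon>0$ small enough that $b-\varepsilon\in(0,t)$; since $T_{\mu\sqrt{r},t}$ has a density we may write
$$P\big(T_{\mu\sqrt{r},t}\in(b-\varepsilon,b+\varepsilon)\big)=P(T_{\mu\sqrt{r},t}>b-\varepsilon)-P(T_{\mu\sqrt{r},t}>b+\varepsilon).$$
For \eqref{eq:forUB} I bound the interval probability above by the single tail $P(T_{\mu\sqrt{r},t}>b-\varepsilon)$, so that $\limsup_{r\to\infty}\frac{1}{r}\log P(\cdots)\leq-\frac{\mu^2}{2}(b-\varepsilon)$, and letting $\varepsilon\to0$ gives the bound $-\frac{\mu^2}{2}b=-J(b)$. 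For \eqref{eq:forLB} I factor out the first tail and observe that the ratio $P(T_{\mu\sqrt{r},t}>b+\varepsilon)/P(T_{\mu\sqrt{r},t}>b-\varepsilon)$ tends to $0$, since dividing its logarithm by $r$ tends to $-\frac{\mu^2}{2}(b+\varepsilon)+\frac{\mu^2}{2}(b-\varepsilon)=-\mu^2\varepsilon<0$; whence the bracketed factor tends to $1$ and $\liminf_{r\to\infty}\frac{1}{r}\log P(\cdots)\geq-\frac{\mu^2}{2}(b-\varepsilon)$, and letting $\varepsilon\to0$ again yields $-J(b)$. The boundary cases are simpler: for $b=0$ both bounds follow from $P(T_{\mu\sqrt{r},t}>\varepsilon)\to0$ (so $P(T_{\mu\sqrt{r},t}<\varepsilon)\to1$ and the rate is $0=J(0)$) together with the trivial bound $\log P\leq0$, while for $b=t$ the term $P(T_{\mu\sqrt{r},t}>b+\varepsilon)$ vanishes identically and the same computation applies.

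Finally, $J$ is a good rate function because it is non-negative, lower semicontinuous and has level sets $\{b:J(b)\leq\eta\}=[0,\min(t,2\eta/\mu^2)]$ that are closed and bounded, hence compact. The only genuinely analytical point is the tail asymptotic of the second step; everything else is bookkeeping with tails and limits. I expect the main obstacle to be controlling the subtraction in the lower bound, but the exponential separation $e^{-\mu^2\varepsilon r}$ between the two tails makes this transparent.
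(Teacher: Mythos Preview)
Your argument is correct. The approach, however, differs from the paper's. The paper works with the \emph{density} \eqref{eq:density}: for $b\in(0,t)$ it invokes the mean value theorem to write $P(T_{\mu\sqrt{r},t}\in(b-\varepsilon,b+\varepsilon))=2\varepsilon f_{\mu\sqrt{r},t}(\tilde a_r)$ and then bounds $f_{\mu\sqrt{r},t}(\tilde a_r)$ above and below by explicitly estimating the two pieces of \eqref{eq:density}; the boundary cases $b=0$ and $b=t$ are handled by separate, similar density estimates. You instead work with the \emph{distribution function} \eqref{eq:distribution-function}, first proving the single tail asymptotic $\lim_{r\to\infty}\frac{1}{r}\log P(T_{\mu\sqrt{r},t}>a)=-\frac{\mu^2}{2}a$ by a Laplace estimate, and then reading off both local bounds from the identity $P(T\in(b-\varepsilon,b+\varepsilon))=P(T>b-\varepsilon)-P(T>b+\varepsilon)$ and the exponential separation of the two tails. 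Your route is more economical: it avoids the density altogether, reduces the boundary cases to trivialities, and is in fact the same ``difference of tails'' strategy the paper adopts for Proposition~\ref{prop:MD} (where it cites Lemma~19 of \cite{GaneshTorrisi} for the subtraction step you handle directly). The paper's route, by contrast, gives pointwise control of the density itself, which is slightly more information than the LDP requires. One cosmetic point: in the interior case you should take $\varepsilon$ small enough that both $b-\varepsilon$ and $b+\varepsilon$ lie in $(0,t)$, so that the tail asymptotic applies to $b+\varepsilon$ as well.
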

\begin{proof}
We prove the LDP by referring to the consequence of Theorem 4.1.11 in
\cite{DemboZeitouni} recalled above; so, for all $b\in [0,t]$ (here $[0,t]$
plays the role of compact interval $K$), we have to check \eqref{eq:forLB} and 
\eqref{eq:forUB} with $T_{\mu\sqrt{r},t}$ and $J(b)$ in place of $W(r)$ and $I(w)$,
respectively. We have three different cases.

\paragraph{Case $b\in(0,t)$.} Without loss of generality we can take $\varepsilon>0$ 
small enough to have $(b-\varepsilon,b+\varepsilon)\subset(0,t)$. Then, there exists
$\tilde{a}_r=\tilde{a}_r(\varepsilon,b)\in(b-\varepsilon,b+\varepsilon)$
such that
\begin{equation}\label{eq:mean-value-theorem}
P(T_{\mu\sqrt{r},t}\in(b-\varepsilon,b+\varepsilon))=\int_{b-\varepsilon}^{b+\varepsilon}f_{\mu\sqrt{r},t}(a)da
=f_{\mu\sqrt{r},t}(\tilde{a}_r)2\varepsilon.
\end{equation}
For the proof of \eqref{eq:forLB} we take into account \eqref{eq:density}, and after 
some manipulations, we get
\begin{multline*}
f_{\mu\sqrt{r},t}(\tilde{a}_r)\geq\frac{1}{\pi\sqrt{(b+\varepsilon)(t-(b-\varepsilon))}}\int_{\tilde{a}_r}^t\frac{\mu^2r}{2}e^{-\frac{\mu^2r}{2}y}dy\\
=\frac{e^{-\frac{\mu^2r}{2}\tilde{a}_r}-e^{-\frac{\mu^2r}{2}t}}{\pi\sqrt{(b+\varepsilon)(t-(b-\varepsilon))}}
=\frac{e^{-\frac{\mu^2r}{2}\tilde{a}_r}(1-e^{-\frac{\mu^2r}{2}(t-\tilde{a}_r)})}{\pi\sqrt{(b+\varepsilon)(t-(b-\varepsilon))}}
\geq\frac{e^{-\frac{\mu^2r}{2}(b+\varepsilon)}(1-e^{-\frac{\mu^2r}{2}(t-(b+\varepsilon))})}{\pi\sqrt{(b+\varepsilon)(t-(b-\varepsilon))}};
\end{multline*}
thus, by \eqref{eq:mean-value-theorem}, we have
$$\liminf_{r\to\infty}\frac{1}{r}\log P(T_{\mu\sqrt{r},t}\in (b-\varepsilon,b+\varepsilon))\geq-\frac{\mu^2}{2}(b+\varepsilon),$$
and therefore
$$\lim_{\varepsilon\to 0}\liminf_{r\to\infty}\frac{1}{r}\log P(T_{\mu\sqrt{r},t}\in (b-\varepsilon,b+\varepsilon))\geq-\frac{\mu^2}{2}b=-J(b).$$
For the proof of \eqref{eq:forUB} we take into account \eqref{eq:density} and, after 
some manipulations, we get
\begin{multline*}
f_{\mu\sqrt{r},t}(\tilde{a}_r)\leq
\frac{e^{-\frac{\mu^2r}{2}t}}{\pi\sqrt{(b-\varepsilon)(t-(b+\varepsilon))}}
+\frac{\mu^2r}{2\pi}\int_{\tilde{a}_r}^t\frac{e^{-\frac{\mu^2r}{2}y}}{\sqrt{\tilde{a}_r(y-\tilde{a}_r)}}dy\\
\leq\frac{e^{-\frac{\mu^2r}{2}t}}{\pi\sqrt{(b-\varepsilon)(t-(b+\varepsilon))}}
+\frac{\mu^2r}{2\pi}\frac{e^{-\frac{\mu^2r}{2}\tilde{a}_r}}{\sqrt{b-\varepsilon}}\int_{\tilde{a}_r}^t\frac{1}{\sqrt{y-\tilde{a}_r}}dy\\
\leq\frac{e^{-\frac{\mu^2r}{2}t}}{\pi\sqrt{(b-\varepsilon)(t-(b+\varepsilon))}}
+\frac{\mu^2r}{2\pi}\frac{e^{-\frac{\mu^2r}{2}(b-\varepsilon)}}{\sqrt{b-\varepsilon}}2\sqrt{t-(b-\varepsilon)};
\end{multline*}
thus, by \eqref{eq:mean-value-theorem} and by Lemma 1.2.15 in \cite{DemboZeitouni}, we have
$$\limsup_{r\to\infty}\frac{1}{r}\log P(T_{\mu\sqrt{r},t}\in (b-\varepsilon,b+\varepsilon))
\leq\max\left\{-\frac{\mu^2}{2}t,-\frac{\mu^2}{2}(b-\varepsilon)\right\}=-\frac{\mu^2}{2}(b-\varepsilon),$$
and therefore
$$\lim_{\varepsilon\to 0}\limsup_{r\to\infty}\frac{1}{r}\log P(T_{\mu\sqrt{r},t}\in (b-\varepsilon,b+\varepsilon))\leq-\frac{\mu^2}{2}b=-J(b).$$

\paragraph{Case $b=0$.} We can take $\varepsilon>0$ small enough to have 
$(0,\varepsilon)\subset(0,t)$. Then, there exists
$\tilde{a}_r=\tilde{a}_r(\varepsilon)\in(0,\varepsilon)$
such that
\begin{equation}\label{eq:mean-value-theorem-b=0}
P(T_{\mu\sqrt{r},t}\in(0,\varepsilon))=\int_0^\varepsilon f_{\mu\sqrt{r},t}(a)da
=f_{\mu\sqrt{r},t}(\tilde{a}_r)\varepsilon.
\end{equation}
For the proof of \eqref{eq:forLB} we take into account \eqref{eq:density}, and after 
some manipulations, we get
\begin{multline*}
f_{\mu\sqrt{r},t}(\tilde{a}_r)\geq\frac{1}{\pi\sqrt{\varepsilon t}}\int_{\tilde{a}_r}^t\frac{\mu^2r}{2}e^{-\frac{\mu^2r}{2}y}dy\\
=\frac{e^{-\frac{\mu^2r}{2}\tilde{a}_r}-e^{-\frac{\mu^2r}{2}t}}{\pi\sqrt{\varepsilon t}}
=\frac{e^{-\frac{\mu^2r}{2}\tilde{a}_r}(1-e^{-\frac{\mu^2r}{2}(t-\tilde{a}_r)})}{\pi\sqrt{\varepsilon t}}
\geq\frac{e^{-\frac{\mu^2r}{2}\varepsilon}(1-e^{-\frac{\mu^2r}{2}(t-\varepsilon)})}{\pi\sqrt{\varepsilon t}};
\end{multline*}
thus, by \eqref{eq:mean-value-theorem-b=0}, we have
$$\liminf_{r\to\infty}\frac{1}{r}\log P(T_{\mu\sqrt{r},t}\in (0,\varepsilon))\geq-\frac{\mu^2}{2}\varepsilon,$$
and therefore, since $P(T_{\mu\sqrt{r},t}\in (0,\varepsilon))=P(T_{\mu\sqrt{r},t}\in (-\varepsilon,\varepsilon))$,
$$\lim_{\varepsilon\to 0}\liminf_{r\to\infty}\frac{1}{r}\log P(T_{\mu\sqrt{r},t}\in (-\varepsilon,\varepsilon))\geq 0=-J(0).$$
The proof of \eqref{eq:forUB} is immediate; in fact we have
$$\lim_{\varepsilon\to 0}\limsup_{r\to\infty}\frac{1}{r}\log P(T_{\mu\sqrt{r},t}\in (-\varepsilon,\varepsilon))\leq 0=-J(0),$$
noting that $\frac{1}{r}\log P(T_{\mu\sqrt{r},t}\in (-\varepsilon,\varepsilon))\leq 0$
for all $r>0$.

\paragraph{Case $b=t$.} We can take $\varepsilon>0$ small enough to have 
$(t-\varepsilon,t)\subset(0,t)$. Then, by \eqref{eq:density}, we have
\begin{equation}\label{eq:starting-point-b=t}
P(T_{\mu\sqrt{r},t}\in(t-\varepsilon,t))=\int_{t-\varepsilon}^tf_{\mu\sqrt{r},t}(a)da
=\int_{t-\varepsilon}^t\left(\frac{e^{-\frac{\mu^2r}{2}t}}{\pi\sqrt{a(t-a)}}
+\frac{\mu^2r}{2\pi}\int_a^t\frac{e^{-\frac{\mu^2r}{2}y}}{\sqrt{a(y-a)}}dy\right)da.
\end{equation}
For the proof of \eqref{eq:forLB} we take into account \eqref{eq:starting-point-b=t}, and after 
some manipulations, we get
$$P(T_{\mu\sqrt{r},t}\in(t-\varepsilon,t))\geq\int_{t-\varepsilon}^t\frac{e^{-\frac{\mu^2r}{2}t}}{\pi\sqrt{a(t-a)}}da
\geq\frac{e^{-\frac{\mu^2r}{2}t}}{\pi\sqrt{t}}\int_{t-\varepsilon}^t\frac{1}{\sqrt{t-a}}da
=\frac{e^{-\frac{\mu^2r}{2}t}}{\pi\sqrt{t}}2\sqrt{\varepsilon};$$
thus
$$\liminf_{r\to\infty}\frac{1}{r}\log P(T_{\mu\sqrt{r},t}\in (t-\varepsilon,t))\geq-\frac{\mu^2}{2}t,$$
and therefore, since $P(T_{\mu\sqrt{r},t}\in (t-\varepsilon,t))=P(T_{\mu\sqrt{r},t}\in (t-\varepsilon,t+\varepsilon))$,
$$\lim_{\varepsilon\to 0}\liminf_{r\to\infty}\frac{1}{r}\log P(T_{\mu\sqrt{r},t}\in (t-\varepsilon,t+\varepsilon))\geq-\frac{\mu^2}{2}t=-J(t).$$
For the proof of \eqref{eq:forUB} we take into account \eqref{eq:starting-point-b=t} and, after 
some manipulations, we get
\begin{multline*}
P(T_{\mu\sqrt{r},t}\in(t-\varepsilon,t))=\frac{e^{-\frac{\mu^2r}{2}t}}{\pi}\int_{t-\varepsilon}^t\frac{1}{\sqrt{a(t-a)}}da
+\frac{\mu^2r}{2\pi}\int_{t-\varepsilon}^tda\int_a^tdy\frac{e^{-\frac{\mu^2r}{2}y}}{\sqrt{a(y-a)}}\\
\leq\frac{e^{-\frac{\mu^2r}{2}t}}{\pi\sqrt{t-\varepsilon}}\int_{t-\varepsilon}^t\frac{1}{\sqrt{t-a}}da
+\frac{\mu^2r}{2\pi}\int_{t-\varepsilon}^tda\frac{e^{-\frac{\mu^2r}{2}a}}{\sqrt{a}}\int_a^tdy\frac{1}{\sqrt{y-a}}\\
\leq\frac{e^{-\frac{\mu^2r}{2}t}}{\pi\sqrt{t-\varepsilon}}2\sqrt{\varepsilon}
+\frac{\mu^2r}{2\pi}\int_{t-\varepsilon}^tda\frac{e^{-\frac{\mu^2r}{2}a}}{\sqrt{a}}2\sqrt{t-a}
\leq\frac{e^{-\frac{\mu^2r}{2}t}}{\pi\sqrt{t-\varepsilon}}2\sqrt{\varepsilon}
+\frac{\mu^2r}{2\pi}\frac{e^{-\frac{\mu^2r}{2}(t-\varepsilon)}}{\sqrt{t-\varepsilon}}2\sqrt{\varepsilon};
\end{multline*}
thus, by Lemma 1.2.15 in \cite{DemboZeitouni}, we have
$$\limsup_{r\to\infty}\frac{1}{r}\log P(T_{\mu\sqrt{r},t}\in (t-\varepsilon,t))
\leq\max\left\{-\frac{\mu^2}{2}t,-\frac{\mu^2}{2}(t-\varepsilon)\right\}=-\frac{\mu^2}{2}(t-\varepsilon),$$
and therefore, since $P(T_{\mu\sqrt{r},t}\in (t-\varepsilon,t))=P(T_{\mu\sqrt{r},t}\in (t-\varepsilon,t+\varepsilon))$,
$$\lim_{\varepsilon\to 0}\limsup_{r\to\infty}\frac{1}{r}\log P(T_{\mu\sqrt{r},t}\in (t-\varepsilon,t+\varepsilon))\leq-\frac{\mu^2}{2}t=-J(t).$$
\end{proof}

Now we present the class of LDPs inspired by moderate deviations.

\begin{Proposition}\label{prop:MD}
For every choice of positive numbers $\{\gamma_r:r>0\}$ such that
\eqref{eq:conditions-MD} holds, the family of random variables 
$\{r\gamma_rT_{\mu\sqrt{r},t}:r>0\}$ satisfies the 
LDP with speed $v_r=1/\gamma_r$ and good rate function $\tilde{J}$ defined by
$$\tilde{J}(b):=\left\{\begin{array}{ll}
\frac{\mu^2}{2}b&\ if\ b\in[0,\infty)\\
\infty&\ otherwise.
\end{array}\right.$$
\end{Proposition}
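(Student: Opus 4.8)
The plan is to follow the scheme of Proposition \ref{prop:LD}, but confronting one essential new difficulty: the random variables $W(r)=r\gamma_rT_{\mu\sqrt{r},t}$ take values in $[0,r\gamma_rt]$, whose right endpoint diverges by \eqref{eq:conditions-MD}, so they are no longer confined to a fixed compact interval and the consequence of Theorem 4.1.11 in \cite{DemboZeitouni} cannot be invoked directly. I would therefore establish the LDP in two stages. First I would prove a weak LDP by checking, for every $b\in\mathbb{R}$, the local estimates (the analogues of \eqref{eq:forLB} and \eqref{eq:forUB} with $1/r$ replaced by $\gamma_r$, since here $v_r=1/\gamma_r$). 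Then I would prove exponential tightness, which simultaneously forces the rate function to be good and upgrades the weak LDP to the full LDP, via the combination of Theorem 4.1.11 and the exponential tightness argument of Lemma 1.2.18 in \cite{DemboZeitouni}.

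The computation underlying everything is the tail log-asymptotics. Starting from \eqref{eq:distribution-function} with $\mu\sqrt{r}$ in place of $\mu$, for fixed $c>0$ and $r$ large enough that $a_r:=c/(r\gamma_r)\in(0,t)$ (which holds since $r\gamma_r\to\infty$), one gets
$$P(W(r)>c)=P(T_{\mu\sqrt{r},t}>a_r)=\frac{2}{\pi}\int_0^{\sqrt{(t-a_r)/a_r}}\frac{e^{-\frac{\mu^2r}{2}a_r(1+y^2)}}{1+y^2}\,dy,$$
and since $\frac{\mu^2r}{2}a_r=\frac{\mu^2c}{2\gamma_r}$ this equals $\frac{2}{\pi}e^{-\frac{\mu^2c}{2\gamma_r}}\int_0^{R_r}\frac{e^{-\frac{\mu^2c}{2\gamma_r}y^2}}{1+y^2}\,dy$ with $R_r\to\infty$. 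The surviving integral is sandwiched between a constant times $\sqrt{\gamma_r}$ from below (restricting to $y\in(0,\sqrt{\gamma_r})$) and $\frac12\sqrt{2\pi\gamma_r/(\mu^2c)}$ from above (extending to $y\in(0,\infty)$ and dropping $1+y^2$); both bounds are sub-exponential at speed $1/\gamma_r$ because $\gamma_r\to0$, so $\gamma_r\log$ of the integral vanishes and I obtain, for every $c>0$,
$$\lim_{r\to\infty}\gamma_r\log P(W(r)>c)=-\frac{\mu^2}{2}c.$$

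From this single estimate the rest is routine. Exponential tightness follows by taking $K_L=[0,2L/\mu^2]$, since $W(r)\geq0$ and $\limsup_{r\to\infty}\gamma_r\log P(W(r)>2L/\mu^2)\leq -L$. For the local estimates, the case $b<0$ is trivial with $\tilde{J}(b)=\infty$; for $b=0$ one has $P(W(r)\in(-\varepsilon,\varepsilon))=1-P(W(r)\geq\varepsilon)\to1$, yielding the value $0=-\tilde{J}(0)$ for both bounds; and for $b>0$, taking $\varepsilon\in(0,b)$, the upper bound follows from $P(W(r)\in(b-\varepsilon,b+\varepsilon))\leq P(W(r)>b-\varepsilon)$, while the lower bound follows by writing $P(W(r)\in(b-\varepsilon,b+\varepsilon))=P(W(r)>b-\varepsilon)-P(W(r)\geq b+\varepsilon)$ and observing that, by the tail asymptotics, the second term is exponentially negligible relative to the first, so that $P(W(r)\in(b-\varepsilon,b+\varepsilon))\geq\frac12 P(W(r)>b-\varepsilon)$ for large $r$; letting $\varepsilon\to0$ gives $-\frac{\mu^2}{2}b=-\tilde{J}(b)$ in both cases. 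I expect the main obstacle to be the tail estimate in the second paragraph, specifically verifying that the residual $y$-integral contributes nothing at the exponential scale $1/\gamma_r$, which is precisely where the interplay of the two conditions in \eqref{eq:conditions-MD} enters.
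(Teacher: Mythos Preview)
Your proposal is correct, and it pivots on exactly the same tail estimate as the paper, namely
\[
\lim_{r\to\infty}\gamma_r\log P\bigl(r\gamma_rT_{\mu\sqrt{r},t}\geq z\bigr)=-\frac{\mu^2}{2}z\qquad(z\geq0),
\]
which the paper records as \eqref{eq:half-lines}. Your sandwich bounds on the residual $y$-integral (restricting to $y\in(0,\sqrt{\gamma_r})$ below, extending to $(0,\infty)$ and dropping $1/(1+y^2)$ above) are a clean elementary substitute for the paper's route, which rewrites the integral in terms of the Normal density $\varphi(\cdot;0,\gamma_r/(\mu^2z))$ and then appeals to weak convergence of that Normal law to the Dirac mass at zero together with a Gaussian tail bound to show the integral tends to $1$.

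Where the two arguments diverge is in the passage from \eqref{eq:half-lines} to the full LDP. The paper does \emph{not} go through Theorem~4.1.11 plus exponential tightness: it verifies \eqref{eq:UB} and \eqref{eq:LB} directly. For the upper bound on a closed set $C$ it exploits the monotonicity of $\tilde{J}$ on $[0,\infty)$, setting $z_C=\min(C\cap[0,\infty))$ and using $P(W(r)\in C)\leq P(W(r)\geq z_C)$; for the lower bound it takes $(b-\varepsilon,b+\varepsilon)\subset G$ and handles the difference of tails by invoking Lemma~19 of \cite{GaneshTorrisi} (your ``$\frac12 P(W(r)>b-\varepsilon)$'' argument is exactly the content of that lemma, written out by hand). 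Your route---weak LDP from local estimates, then upgrade via exponential tightness (Lemma~1.2.18 in \cite{DemboZeitouni})---is slightly longer but more modular: it does not rely on the monotonicity of $\tilde{J}$ and would transfer unchanged to a non-monotone rate function, whereas the paper's shortcut for the upper bound is specific to this monotone situation.
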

\begin{proof}
We can restrict the attention on the case $b\geq 0$ because we deal with
nonnegative random variables (and $[0,\infty)$ is a closed set). We assume
for the moment that we have
\begin{equation}\label{eq:half-lines}
\lim_{r\to\infty}\gamma_r\log P(r\gamma_rT_{\mu\sqrt{r},t}\geq z)=-\frac{\mu^2}{2}z\ (\mbox{for all}\ z\geq 0).
\end{equation}
We have to check the upper bound \eqref{eq:UB} and the lower bound \eqref{eq:LB}
with $v_r=1/\gamma_r$ and $W(r)=r\gamma_rT_{\mu\sqrt{r},t}$.
\begin{itemize}
\item The upper bound \eqref{eq:UB} trivially holds if $C\cap [0,\infty)$ is empty.
On the contrary, there exists $z_C:=\min(C\cap [0,\infty))$ (the existence of $z_C$ 
is guaranteed because $C\cap [0,\infty)$ is a non-empty closed set), and we have
$$P(r\gamma_rT_{\mu\sqrt{r},t}\in C)\leq P(r\gamma_rT_{\mu\sqrt{r},t}\geq z_C);$$
thus, by \eqref{eq:half-lines} and by the monotonicity of $\tilde{J}(b)$ on $[0,\infty)$, we get
$$\limsup_{r\to\infty}\frac{1}{1/\gamma_r}\log P(r\gamma_rT_{\mu\sqrt{r},t}\in C)\leq
\limsup_{r\to\infty}\gamma_r\log P(r\gamma_rT_{\mu\sqrt{r},t}\geq z_C)=-\frac{\mu^2}{2}z_C=-\inf_{b\in C}\tilde{J}(b).$$
\item It is known (see e.g. \cite{DemboZeitouni}, condition (b) with equation (1.2.8)) that the 
lower bound \eqref{eq:LB} holds if and only if, for all $b\geq 0$ and for all open sets $G$ such
that $b\in G$, we have
\begin{equation}\label{eq:LB-equivalent}
\liminf_{r\to\infty}\frac{1}{1/\gamma_r}\log P(r\gamma_rT_{\mu\sqrt{r},t}\in G)\geq-\tilde{J}(b).
\end{equation}
In order to get this condition we remark that there exists $\varepsilon>0$ such that 
$(b-\varepsilon,b+\varepsilon)\subset G$, and we have
$$P(r\gamma_rT_{\mu\sqrt{r},t}\in G)\geq 
P(r\gamma_rT_{\mu\sqrt{r},t}\geq b-\varepsilon)-P(r\gamma_rT_{\mu\sqrt{r},t}\geq b+\varepsilon).$$
Then, by a suitable application of Lemma 19 in \cite{GaneshTorrisi} (and by taking into account 
\eqref{eq:half-lines}), we get
\begin{multline*}
\liminf_{r\to\infty}\frac{1}{1/\gamma_r}\log P(r\gamma_rT_{\mu\sqrt{r},t}\in G)\\
\geq\liminf_{r\to\infty}\gamma_r\log (P(r\gamma_rT_{\mu\sqrt{r},t}\geq b-\varepsilon)-P(r\gamma_rT_{\mu\sqrt{r},t}\geq b+\varepsilon))
\geq-\frac{\mu^2}{2}(b-\varepsilon).
\end{multline*}
Thus we get \eqref{eq:LB-equivalent} by letting $\varepsilon$ go to zero.
\end{itemize}
In conclusion, we complete the proof showing that \eqref{eq:half-lines} holds.
The case $z=0$ is trivial, and therefore we take $z>0$. We take $r$ large enough
such that $\frac{z}{r\gamma_r}\in[0,t]$ (we recall that $\lim_{r\to\infty}r\gamma_r=\infty$)
and, by \eqref{eq:distribution-function}, we have
$$P(r\gamma_rT_{\mu\sqrt{r},t}\geq z)=P\left(T_{\mu\sqrt{r},t}\geq\frac{z}{r\gamma_r}\right)=
\frac{2}{\pi}\int_0^{\sqrt{\frac{t-z/(r\gamma_r)}{z/(r\gamma_r)}}}\frac{e^{-\frac{\mu^2z}{2\gamma_r}(1+y^2)}}{1+y^2}dy.$$
Then
$$P(r\gamma_rT_{\mu\sqrt{r},t}\geq z)=
\frac{e^{-\frac{\mu^2z}{2\gamma_r}}}{\pi}\int_{-\sqrt{\frac{t-z/(r\gamma_r)}{z/(r\gamma_r)}}}^{\sqrt{\frac{t-z/(r\gamma_r)}{z/(r\gamma_r)}}}
\frac{e^{-\frac{\mu^2z}{2\gamma_r}y^2}}{1+y^2}dy
=\sqrt{\frac{2\gamma_r}{\pi\mu^2z}}e^{-\frac{\mu^2z}{2\gamma_r}}
\int_{-\sqrt{\frac{t-z/(r\gamma_r)}{z/(r\gamma_r)}}}^{\sqrt{\frac{t-z/(r\gamma_r)}{z/(r\gamma_r)}}}
\frac{\varphi(y;0,\gamma_r/(\mu^2z))}{1+y^2}dy,$$
and
$$\gamma_r\log P(r\gamma_rT_{\mu\sqrt{r},t}\geq z)
=\gamma_r\log\sqrt{\frac{2\gamma_r}{\pi\mu^2z}}-\frac{\mu^2}{2}z+\gamma_r\log
\int_{-\sqrt{\frac{t-z/(r\gamma_r)}{z/(r\gamma_r)}}}^{\sqrt{\frac{t-z/(r\gamma_r)}{z/(r\gamma_r)}}}
\frac{\varphi(y;0,\gamma_r/(\mu^2z))}{1+y^2}dy;$$
thus we complete the proof of \eqref{eq:half-lines} showing that
\begin{equation}\label{eq:final-step}
\lim_{r\to\infty}\int_{-\sqrt{\frac{t-z/(r\gamma_r)}{z/(r\gamma_r)}}}^{\sqrt{\frac{t-z/(r\gamma_r)}{z/(r\gamma_r)}}}
\frac{\varphi(y;0,\gamma_r/(\mu^2z))}{1+y^2}dy=1.
\end{equation}
In order to do that we remark that, by the triangular inequality and after some easy manipulations,
we get
$$\left|\int_{-\sqrt{\frac{t-z/(r\gamma_r)}{z/(r\gamma_r)}}}^{\sqrt{\frac{t-z/(r\gamma_r)}{z/(r\gamma_r)}}}
\frac{\varphi(y;0,\gamma_r/(\mu^2z))}{1+y^2}dy-1\right|\leq A_1(r)+A_2(r)$$
where
$$A_1(r):=\left|\int_{-\infty}^\infty\frac{\varphi(y;0,\gamma_r/(\mu^2z))}{1+y^2}dy-1\right|$$
and
$$A_2(r):=2\int_{\sqrt{\frac{t-z/(r\gamma_r)}{z/(r\gamma_r)}}}^\infty\frac{\varphi(y;0,\gamma_r/(\mu^2z))}{1+y^2}dy.$$
Then \eqref{eq:final-step} holds (and this completes the proof) noting that:
\begin{itemize}
\item $\lim_{r\to\infty}A_1(r)=0$ by the weak convergence of the centered Normal distribution with 
variance $\frac{\gamma_r}{\mu^2z}$ to zero (as $r\to\infty$);
\item $\lim_{r\to\infty}A_2(r)=0$ by taking into account that
$$0\leq A_2(r)\leq\frac{2\frac{\gamma_r}{\mu^2z}}{\sqrt{2\pi}\sqrt{\frac{t-z/(r\gamma_r)}{z/(r\gamma_r)}}}
\exp\left(-\frac{t-z/(r\gamma_r)}{2z/(r\gamma_r)}\frac{\mu^2z}{\gamma_r}\right)$$
by a well-known estimate of the tail of Gaussian distribution.
\end{itemize}
\end{proof}

We conclude with the minor result for the crossing probability in 
\eqref{eq:crossing-probability}.

\begin{Proposition}\label{prop:crossing-probability}
We consider $b>a>0$. Then
$$\lim_{r\to\infty}\frac{1}{r}\log\Psi_{[a,b]}(\mu\sqrt{r})=-\frac{\mu^2}{2}a\ \mbox{and}\ \lim_{r\to\infty}e^{\frac{\mu^2ra}{2}}\sqrt{r}\Psi_{[a,b]}(\mu\sqrt{r})=\sqrt{\frac{2}{\pi\mu^2a}}.$$
\end{Proposition}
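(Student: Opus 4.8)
The plan is to handle both limits through a single substitution that simultaneously strips off the leading exponential factor and resolves the integrable singularity at the left endpoint $s=0$. Replacing $\mu$ by $\mu\sqrt{r}$ in \eqref{eq:crossing-probability} gives
$$\Psi_{[a,b]}(\mu\sqrt{r})=\frac{1}{\pi}\int_0^{b-a}\frac{e^{-\frac{\mu^2r}{2}(a+s)}}{a+s}\sqrt{\frac{a}{s}}\,ds,$$
and I would then set $u=\frac{\mu^2r}{2}s$, which turns the exponent $\frac{\mu^2r}{2}(a+s)$ into $\frac{\mu^2r}{2}a+u$ and upgrades the $s^{-1/2}$ singularity to the standard integrable factor $u^{-1/2}$. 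After collecting the constants one arrives at
$$e^{\frac{\mu^2ra}{2}}\sqrt{r}\,\Psi_{[a,b]}(\mu\sqrt{r})=\frac{\sqrt{a}}{\pi}\sqrt{\frac{2}{\mu^2}}\int_0^{\frac{\mu^2r}{2}(b-a)}\frac{e^{-u}}{\left(a+\frac{2u}{\mu^2r}\right)\sqrt{u}}\,du,$$
where the key point is that the $\sqrt{r}$ supplied by the normalization exactly cancels the $\sqrt{r}$ produced by $\sqrt{s}$ under the change of variables, leaving an $r$-free prefactor.

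The second (sharp) limit then follows from dominated convergence. As $r\to\infty$ the upper limit of integration tends to $+\infty$ and $a+\frac{2u}{\mu^2r}\to a$ for each fixed $u$, so the integrand converges pointwise to $\frac{e^{-u}}{a\sqrt{u}}$; moreover $a+\frac{2u}{\mu^2r}\geq a$, so the whole family is dominated by the integrable function $\frac{e^{-u}}{a\sqrt{u}}$. Hence the integral tends to $\frac{1}{a}\int_0^\infty u^{-1/2}e^{-u}\,du=\frac{\Gamma(1/2)}{a}=\frac{\sqrt{\pi}}{a}$, and inserting this value reproduces exactly $\frac{\sqrt{a}}{\pi}\sqrt{\frac{2}{\mu^2}}\cdot\frac{\sqrt{\pi}}{a}=\sqrt{\frac{2}{\pi\mu^2a}}$, which is the second claimed limit.

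For the first limit I would simply read off the exponential rate from the sharp asymptotic just obtained. The second limit gives $\Psi_{[a,b]}(\mu\sqrt{r})\sim\sqrt{\frac{2}{\pi\mu^2a}}\,r^{-1/2}e^{-\frac{\mu^2ra}{2}}$, so that $\frac{1}{r}\log\Psi_{[a,b]}(\mu\sqrt{r})$ equals $-\frac{\mu^2a}{2}$ plus contributions of order $\frac{\log r}{r}$, which vanish as $r\to\infty$. Alternatively, the first limit admits a cruder self-contained proof: bounding $a\leq a+s\leq b$ squeezes the factor $e^{-\frac{\mu^2r}{2}(a+s)}$ between $e^{-\frac{\mu^2r}{2}b}$ and $e^{-\frac{\mu^2r}{2}a}$ while the remaining $s$-integral contributes only polynomially, and Lemma 1.2.15 in \cite{DemboZeitouni} then pins the rate to $-\frac{\mu^2a}{2}$.

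The computation is essentially routine once the substitution is in place, and I do not expect a genuine obstacle: the only step needing care is the domination, and this is immediate since enlarging $a$ to $a+\frac{2u}{\mu^2r}$ only increases the denominator. The single design choice that makes everything work is the rescaling $u=\frac{\mu^2r}{2}s$, dictated by matching the exponential rate $\frac{\mu^2r}{2}$ against the order of the prefactor so that the limiting integral becomes a clean Gamma-function value.
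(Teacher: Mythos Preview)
Your argument is correct. The computation after the substitution $u=\frac{\mu^2r}{2}s$ checks out, the domination by $\frac{e^{-u}}{a\sqrt{u}}$ is valid, and the Gamma-function evaluation gives the stated constant; deriving the first limit from the sharp asymptotic is also fine.

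The route, however, differs from the paper's. The paper uses the substitution $s=ay^2$, which turns the integral into $\frac{2}{\pi}\int_0^{\sqrt{(b-a)/a}}\frac{e^{-\frac{\mu^2ra}{2}(1+y^2)}}{1+y^2}\,dy$ and, after factoring out $e^{-\frac{\mu^2ra}{2}}$ and a normalizing constant, recognizes the remaining integral as $\int\frac{\varphi(y;0,1/(\mu^2ra))}{1+y^2}\,dy$ over a fixed symmetric interval; the limit $1$ then follows from weak convergence of the centered Normal with vanishing variance to the Dirac mass at $0$. Your substitution instead rescales by the exponential rate, sending the upper limit to infinity and reducing the problem to dominated convergence and a $\Gamma(1/2)$ value. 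Your approach is closer in spirit to Laplace's method and is arguably more self-contained (no appeal to weak convergence), while the paper's choice dovetails with the Gaussian manipulations already used in the proof of Proposition~\ref{prop:MD} and makes the formula for $\Psi_{[a,b]}$ look structurally identical to the tail expression in \eqref{eq:distribution-function}. Both are short and neither has any real advantage in rigor.
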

\begin{proof}
We have
$$\Psi_{[a,b]}(\mu\sqrt{r})=\frac{1}{\pi}\int_0^{b-a}\frac{e^{-\frac{\mu^2r}{2}(a+s)}}{a+s}\sqrt{\frac{a}{s}}ds
=\frac{2}{\pi}\int_0^{\sqrt{\frac{b-a}{a}}}\frac{e^{-\frac{\mu^2ra}{2}(1+y^2)}}{1+y^2}dy$$
(in the second equality we take into account the change of variable $s=ay^2$). Thus
$$\Psi_{[a,b]}(\mu\sqrt{r})=\frac{2}{\pi}e^{-\frac{\mu^2ra}{2}}\int_0^{\sqrt{\frac{b-a}{a}}}\frac{e^{-\frac{\mu^2ra}{2}y^2}}{1+y^2}dy
=\sqrt{\frac{2}{\pi\mu^2ra}}e^{-\frac{\mu^2ra}{2}}\int_{-\sqrt{\frac{b-a}{a}}}^{\sqrt{\frac{b-a}{a}}}\frac{\varphi(y;0,1/(\mu^2ra))}{1+y^2}dy.$$
Since
$$\lim_{r\to\infty}\int_{-\sqrt{\frac{b-a}{a}}}^{\sqrt{\frac{b-a}{a}}}\frac{\varphi(y;0,1/(\mu^2ra))}{1+y^2}dy=1,$$
then, by the weak convergence of the centered Normal distribution with variance $\frac{1}{\mu^2ra}$
to zero (as $r\to\infty$), we can get the desired limits with some easy computations.
\end{proof}

\begin{Remark}\label{rem:analogies-with-ruin-probability}
There are some similarities between the limits in Proposition 
\ref{prop:crossing-probability} and some asymptotic estimates of
level crossing probabilities in the literature. For instance, if
we denote the level crossing probability by $\psi(r)$ (here $r>0$
is the level), under suitable conditions (see e.g. 
\cite{DuffyLewisSullivan}) we have
\begin{equation}\label{eq:DLS}
\lim_{r\to\infty}\frac{1}{h(r)}\log\psi(r)=-w
\end{equation}
for some $w>0$ and some scaling function $h(\cdot)$. For instance
here we recall the case of the Cram\'er-Lundberg model in insurance
(see e.g. \cite{AlbrecherAsmussen}), where $\psi(r)$ is interpreted
as the ruin probability and $r$ as the initial capital. Then, under
suitable hypotheses, we have the two following statements:
\begin{itemize}
\item for some $w>0$ and some $c_1,c_2\in(0,1]$ with $c_1\leq c_2$ 
(see e.g. Theorem 6.3 in \cite{AlbrecherAsmussen}, Chapter IV), we have
$$c_1e^{-wr}\leq\psi(r)\leq c_2e^{-wr},$$
which yields \eqref{eq:DLS} with $h(r)=r$;
\item for some $c>0$ (see e.g. (4.3) in \cite{AlbrecherAsmussen}, 
Chapter I; see also Theorem 1.2.2(b) in \cite{EmbrechtsKluppelbergMikosch})
we have
\begin{equation}\label{eq:CL-approximation}
\lim_{r\to\infty}e^{wr}\psi(r)=c.
\end{equation}
\end{itemize}
So the limits in Proposition \ref{prop:crossing-probability} for 
$\Psi_{[a,b]}(\mu\sqrt{r})$ have some relationship with the limits here
for $\psi(r)$. However the scaling factor $e^{\frac{\mu^2ra}{2}}\sqrt{r}$
(see the second limit in Proposition \ref{prop:crossing-probability}) is 
different from $e^{wr}$ in \eqref{eq:CL-approximation}.
\end{Remark}

\section{Comparison with moderate deviation results in the literature}\label{sec:MD-comments}
The term moderate deviations is used in the literature for a class of LDPs for suitable
centered random variables, and governed by the same quadratic rate function (here we 
restrict the attention on real valued random variables for simplicity but, actually, 
a similar concept can be given for vector valued random variables). Proposition 
\ref{prop:MD} also provides a class of LDPs: the random variables are not centered,
but they converge to zero because the rate function $J$ in Proposition \ref{prop:LD} 
uniquely vanishes at zero. So in this section we want to discuss analogies and differences 
between the moderate deviations results in the literature, usually related to the use of
G\"{a}rtner Ellis Theorem (see e.g. Theorem 2.3.6 in \cite{DemboZeitouni}), and
the results in this paper.

We start with Claim \ref{claim:LDMD}, which provides the usual framework for both
large and moderate deviations for a family of random variables $\{W(r):r>0\}$.
There is an initial LDP, and a class of LDPs which concerns moderate deviations.
One can immediately see the analogies with the statements of Propositions 
\ref{prop:LD} and \ref{prop:MD} in this paper; in particular 
\eqref{eq:conditions-MD-gen} below plays the role of \eqref{eq:conditions-MD} in 
Proposition \ref{prop:MD}.

\begin{Claim}\label{claim:LDMD}
We assume that, for all $\theta\in\mathbb{R}$,
$$\Lambda(\theta):=\lim_{r\to\infty}\frac{1}{v_r}\log\mathbb{E}[e^{v_r\theta W(r)}]$$
exists as an extended real number. Then, under suitable hypotheses (see e.g. part
(c) of Theorem 2.3.6 in \cite{DemboZeitouni}) the LDP holds with speed $v_r\to\infty$ 
and good rate function $\Lambda^*$ defined by
$$\Lambda^*(w):=\sup_{\theta\in\mathbb{R}}\left\{\theta w-\Lambda(\theta)\right\}.$$

Furthermore, we set $\tilde{\Lambda}(\theta):=\frac{\theta^2}{2}\Lambda^{\prime\prime}(0)$,
where $\Lambda$ is the function above, and $\Lambda^{\prime\prime}$ is its second derivative
(note that $\Lambda^{\prime\prime}(0)\geq 0$ because $\Lambda$ is a convex function). 
Then, for every choice of positive numbers $\{\gamma_r:r>0\}$ such that
\begin{equation}\label{eq:conditions-MD-gen}
\lim_{r\to\infty}\gamma_r=0\ \mbox{and}\ \lim_{r\to\infty}v_r\gamma_r=\infty,
\end{equation}
we can prove that
$$\lim_{r\to\infty}\frac{1}{1/\gamma_r}\log\mathbb{E}[e^{(1/\gamma_r)\theta\sqrt{v_r\gamma_r}(W(r)-\mathbb{E}[W(r)])}]=
\tilde{\Lambda}(\theta)$$
for all $\theta\in\mathbb{R}$; thus $\left\{\sqrt{v_r\gamma_r}(W(r)-\mathbb{E}[W(r)]):r\geq 1\right\}$
satisfies the LDP with speed $1/\gamma_r$ and good rate function $\tilde{\Lambda}^*$ defined by
$$\tilde{\Lambda}^*(w):=\sup_{\theta\in\mathbb{R}}\left\{\theta w-\tilde{\Lambda}(\theta)\right\}=
\left\{\begin{array}{ll}
\frac{w^2}{2\Lambda^{\prime\prime}(0)}&\ \mbox{if}\ \Lambda^{\prime\prime}(0)>0\\
\left\{\begin{array}{ll}
0&\ \mbox{if}\ w=0\\
\infty&\ \mbox{if}\ w\neq 0
\end{array}\right.
&\ \mbox{if}\ \Lambda^{\prime\prime}(0)=0.
\end{array}\right.$$
\end{Claim}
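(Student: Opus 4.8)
The plan is to treat the two assertions separately: the first is a direct invocation of the G\"{a}rtner--Ellis theorem, while the second requires an explicit expansion of the prelimit scaled cumulant generating functions followed by a second application of the same theorem. Throughout I write $\Lambda_r(s):=\frac{1}{v_r}\log\mathbb{E}[e^{v_rsW(r)}]$, so that the standing hypothesis reads $\Lambda_r(\theta)\to\Lambda(\theta)$ for every $\theta$, and I record that $\Lambda_r(0)=0$ and $\Lambda_r'(0)=\mathbb{E}[W(r)]$ whenever these are defined. For the first part I would simply check that $\Lambda$ meets the requirements of part (c) of Theorem 2.3.6 in \cite{DemboZeitouni} (lower semicontinuity, $0$ in the interior of the effective domain, essential smoothness) and read off the LDP with speed $v_r$ and good rate function $\Lambda^*$.

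For the second part the heart of the matter is the limit of the scaled cumulant generating function of $\tilde{W}(r):=\sqrt{v_r\gamma_r}(W(r)-\mathbb{E}[W(r)])$ at speed $1/\gamma_r$. First I would rewrite the coefficient multiplying $W(r)-\mathbb{E}[W(r)]$ in the exponent as $\theta_r:=\theta\sqrt{v_r/\gamma_r}$ and observe that
$$\gamma_r\log\mathbb{E}[e^{(1/\gamma_r)\theta\tilde{W}(r)}]=\gamma_r\big(\log\mathbb{E}[e^{\theta_rW(r)}]-\theta_r\mathbb{E}[W(r)]\big)=\gamma_rv_r\big(\Lambda_r(s_r)-s_r\Lambda_r'(0)\big),$$
where $s_r:=\theta_r/v_r=\theta/\sqrt{v_r\gamma_r}$ tends to $0$ by \eqref{eq:conditions-MD-gen} and $\gamma_rv_rs_r^2=\theta^2$. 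A Taylor expansion of $\Lambda_r$ about the origin then cancels the constant term (since $\Lambda_r(0)=0$) and the linear term (this is exactly the role of the centering, via $\Lambda_r'(0)=\mathbb{E}[W(r)]$), leaving the quadratic contribution $\gamma_rv_r\frac{s_r^2}{2}\Lambda_r''(0)=\frac{\theta^2}{2}\Lambda_r''(0)$ together with a third-order remainder.

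The main obstacle is to show that this remainder, once multiplied by $\gamma_rv_r$, vanishes and that $\Lambda_r''(0)\to\Lambda''(0)$, so that the limit equals $\tilde{\Lambda}(\theta)=\frac{\theta^2}{2}\Lambda''(0)$; mere pointwise convergence $\Lambda_r\to\Lambda$ controls neither the second derivatives nor the size of the remainder along the shrinking argument $s_r$. This is where the ``suitable hypotheses'' enter: assuming on a fixed neighbourhood $[-\delta,\delta]$ of $0$ a uniform-in-$r$ bound $|\Lambda_r'''|\leq M$ together with $\Lambda_r''(0)\to\Lambda''(0)$ (equivalently, $C^2$-convergence of $\Lambda_r$ near $0$), the remainder is $O(|s_r|^3)$ with a uniform constant, whence $\gamma_rv_r|s_r|^3=\theta^2|s_r|\to0$ and the quadratic term converges to $\frac{\theta^2}{2}\Lambda''(0)$. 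I expect verifying such regularity of the prelimit functions $\Lambda_r$ to be the delicate step in any concrete application; the convexity of $\Lambda_r$ at least guarantees $\Lambda''(0)\geq 0$, consistent with the two cases in the statement.

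Finally, with $\tilde{\Lambda}(\theta)=\frac{\theta^2}{2}\Lambda''(0)$ established and $\tilde{\Lambda}$ finite and differentiable on all of $\mathbb{R}$, I would apply Theorem 2.3.6 in \cite{DemboZeitouni} once more to get the LDP with speed $1/\gamma_r$ and good rate function $\tilde{\Lambda}^*$. Computing the Legendre--Fenchel transform of the quadratic $\tilde{\Lambda}$ is routine: when $\Lambda''(0)>0$ the supremum is attained at $\theta=w/\Lambda''(0)$ and equals $\frac{w^2}{2\Lambda''(0)}$, while when $\Lambda''(0)=0$ one has $\tilde{\Lambda}\equiv0$, so $\tilde{\Lambda}^*$ reduces to the indicator of $\{0\}$, matching the stated two cases. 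In the degenerate case $\Lambda''(0)=0$ a short remark suffices to note that $\tilde{W}(r)\to0$ in probability, which makes the G\"{a}rtner--Ellis lower bound at $w=0$ immediate.
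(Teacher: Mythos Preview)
The paper does not actually supply a proof of this claim: it is presented there as an expository summary of the standard G\"artner--Ellis/moderate-deviations framework, with the phrase ``under suitable hypotheses'' deliberately left unspecified and with pointers to Theorems~2.3.6 and~3.7.1 in \cite{DemboZeitouni}. So there is no paper proof to compare against; the claim is meant to recall a known template rather than to establish a new result.

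That said, your sketch is the correct way to flesh out that template. The reduction $\gamma_r\log\mathbb{E}[e^{(1/\gamma_r)\theta\tilde W(r)}]=\gamma_rv_r(\Lambda_r(s_r)-s_r\Lambda_r'(0))$ with $s_r=\theta/\sqrt{v_r\gamma_r}\to 0$ is exactly the computation underlying the moderate-deviation heuristic, and your bookkeeping $\gamma_rv_rs_r^2=\theta^2$, $\gamma_rv_r|s_r|^3=\theta^2|s_r|\to 0$ is right. You are also right to flag that the passage from the pointwise limit $\Lambda_r\to\Lambda$ to control of $\Lambda_r''(0)$ and of the third-order remainder is precisely where extra hypotheses are needed; note that $\Lambda_r''(0)=v_r\mathrm{Var}[W(r)]$, so this convergence is exactly the second limit in the paper's \eqref{eq:mean-variance-formula}. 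In the i.i.d.\ setting (the illustrative example following the claim) these issues disappear because $\Lambda_r\equiv\Lambda$ for all $r$, which is why Theorem~3.7.1 in \cite{DemboZeitouni} goes through cleanly; in the general setting the claim is, as the paper's wording signals, only a heuristic schema.
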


\begin{Remark}\label{rem:Taylor-formula-order-2}
When $\Lambda^{\prime\prime}(0)>0$, the Taylor formula of order 2 of $\Lambda^*$, and 
initial point $\Lambda^\prime(0)$, is $\tilde{\Lambda}^*(w-\Lambda^\prime(0))$. A 
similar relationship concerns the Mac Laurin formula of order 2 of $\Lambda$, that 
is $\theta\Lambda^\prime(0)+\tilde{\Lambda}(\theta)$.
\end{Remark}

In the framework of Claim \ref{claim:LDMD} we also have the following typical features.

\begin{Claim}\label{claim:mean-variance-fill-the-gap}
Firstly we have
\begin{equation}\label{eq:mean-variance-formula}
\lim_{r\to\infty}\mathbb{E}[W(r)]=\Lambda^\prime(0)\ \mbox{and}\ \lim_{r\to\infty}v_r\mathrm{Var}[W(r)]=\Lambda^{\prime\prime}(0).
\end{equation}
Moreover, we can say that moderate deviations fill the gap between
two different regimes (as $r\to\infty$):
\begin{itemize}
\item the convergence of $W(r)-\mathbb{E}[W(r)]$ to zero (case 
$\gamma_r=1/v_r$; note that only the first condition in 
\eqref{eq:conditions-MD-gen} holds), which is equivalent to the
convergence of $W(r)$ to $\Lambda^\prime(0)$;
\item the weak convergence of $\sqrt{v_r}(W(r)-\mathbb{E}[W(r)])$ to the 
centered Normal distribution with variance $\Lambda^{\prime\prime}(0)$ (case 
$\gamma_r=1$; note that only the second condition in \eqref{eq:conditions-MD-gen}
holds).
\end{itemize}
\end{Claim}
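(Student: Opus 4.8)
The plan is to reduce everything to the scaled cumulant generating function $\Lambda_r(\theta):=\frac{1}{v_r}\log\mathbb{E}[e^{v_r\theta W(r)}]$, whose pointwise limit is $\Lambda(\theta)$ by hypothesis. First I would record the exact finite-$r$ identities obtained by differentiating under the expectation sign. Writing $M_r(\theta):=\mathbb{E}[e^{v_r\theta W(r)}]=e^{v_r\Lambda_r(\theta)}$ and differentiating twice at the origin, one finds $\Lambda_r^\prime(0)=\mathbb{E}[W(r)]$ and $\Lambda_r^{\prime\prime}(0)=v_r\mathrm{Var}[W(r)]$. Thus \eqref{eq:mean-variance-formula} is equivalent to the convergence of the first two derivatives of $\Lambda_r$ at the origin to those of $\Lambda$.

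For the derivative convergence I would use that each $\Lambda_r$ is convex (being a normalized log-moment generating function) and converges pointwise to the convex function $\Lambda$. By the standard theory of convex functions, pointwise convergence is then locally uniform and the first derivatives converge at every point where the limit is differentiable, giving $\Lambda_r^\prime(0)\to\Lambda^\prime(0)$. The convergence $\Lambda_r^{\prime\prime}(0)\to\Lambda^{\prime\prime}(0)$ of the second derivatives is the main obstacle: pointwise, or even locally uniform, convergence of convex functions does not by itself control second derivatives. This is precisely where the ``suitable hypotheses'' of the Claim must be invoked, for instance the differentiability and essential smoothness assumptions of the G\"{a}rtner--Ellis framework together with enough regularity to pass to the limit in the curvature at $\theta=0$ (e.g. $C^2$ convergence on a neighbourhood of the origin).

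For the ``filling the gap'' statement I would treat the two boundary regimes separately. When $\gamma_r=1/v_r$ one has $\sqrt{v_r\gamma_r}=1$, so the relevant variable is $W(r)-\mathbb{E}[W(r)]$; from the variance identity just established, $\mathrm{Var}[W(r)]=\Lambda_r^{\prime\prime}(0)/v_r\to 0$, so Chebyshev's inequality gives $W(r)-\mathbb{E}[W(r)]\to 0$ in probability, and the mean identity $\mathbb{E}[W(r)]\to\Lambda^\prime(0)$ shows this is equivalent to $W(r)\to\Lambda^\prime(0)$. When $\gamma_r=1$ the variable is $\sqrt{v_r}(W(r)-\mathbb{E}[W(r)])$ and the assertion is a central limit theorem, which I would prove by computing its scaled log-moment generating function: using $\mathbb{E}[W(r)]=\Lambda_r^\prime(0)$ it equals $v_r[\Lambda_r(\theta/\sqrt{v_r})-(\theta/\sqrt{v_r})\Lambda_r^\prime(0)]$, and a second-order Taylor expansion of $\Lambda_r$ about the origin reduces this to $\frac{\theta^2}{2}\Lambda_r^{\prime\prime}(0)+o(1)\to\frac{\theta^2}{2}\Lambda^{\prime\prime}(0)$. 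Recognizing the right-hand side as the log-moment generating function of the centered Normal law with variance $\Lambda^{\prime\prime}(0)$ yields the weak convergence, most cleanly through the characteristic-function version obtained by replacing $\theta$ with $i\theta$. The delicate point here is the uniform control of the Taylor remainder as $r\to\infty$, which once again rests on the regularity assumed for the family $\{\Lambda_r:r>0\}$ near $\theta=0$.
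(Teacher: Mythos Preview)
The paper does not actually prove this Claim: it is stated in Section~\ref{sec:MD-comments} as one of the ``typical features'' of the moderate-deviation framework, immediately followed by an illustrative i.i.d.\ example, with no proof environment or argument given. The Claim functions there as background context rather than as a result to be established, so there is no paper proof to compare against.

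Your proposal is therefore strictly more than what the paper offers, and as a heuristic derivation it is on the right track. The cumulant identities $\Lambda_r'(0)=\mathbb{E}[W(r)]$ and $\Lambda_r''(0)=v_r\mathrm{Var}[W(r)]$ are exactly the correct starting point, and the convexity argument for first-derivative convergence is standard. You are also right to flag that second-derivative convergence does \emph{not} follow from pointwise convergence of convex functions and must be assumed; the paper's phrasing ``under suitable hypotheses'' in Claim~\ref{claim:LDMD} is deliberately informal and never specifies conditions strong enough to justify $\Lambda_r''(0)\to\Lambda''(0)$ or the uniform Taylor remainder you need for the CLT step. In short, your caveats are genuine and unavoidable: the Claim as written in the paper is a heuristic summary, not a theorem with precise hypotheses, and any rigorous proof would have to supply exactly the regularity assumptions you identify.
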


We present an illustrative example. We consider the case where 
$$W(r):=\frac{X_1+\cdots+X_r}{r}$$
(here $r$ is an integer) and $\{X_n:n\geq 1\}$ are i.i.d. real valued random variables;
moreover we assume that $\mathbb{E}[e^{\theta X_1}]$ is finite in a neighborhood of 
the origin $\theta=0$, and therefore all the (common) moments of the random variables
$\{X_n:n\geq 1\}$ are finite. We denote the common mean by $\mu$ and the common variance
by $\sigma^2$. Then:
\begin{itemize}
\item as far as Claim \ref{claim:LDMD} is concerned, we have the initial LDP $v_r=r$ and
$\Lambda(\theta):=\log\mathbb{E}[e^{\theta X_1}]$ (in this case G\"{a}rtner Ellis Theorem
is not needed because we can refer to the Cram\'er Theorem, see e.g. Theorem 2.2.3 in 
\cite{DemboZeitouni}), and the class of LDPs (see e.g. Theorem 3.7.1. in 
\cite{DemboZeitouni});
\item as far as Claim \ref{claim:mean-variance-fill-the-gap} 
is concerned, the limits in \eqref{eq:mean-variance-formula} trivially hold with
$\mathbb{E}[W(r)]=\mu=\Lambda^\prime(0)$ and $v_r\mathrm{Var}[W(r)]=\sigma^2=\Lambda^{\prime\prime}(0)$
for all integer $r\geq 1$; moreover moderate deviations fill the gap between
the regimes of the classical Law of Large Numbers (for centered random variables) and 
Central Limit Theorem.
\end{itemize}

Finally, we concentrate the attention on the cases studied in Propositions 
\ref{prop:LD} and \ref{prop:MD} in this paper. We already remarked that the
random variables in Proposition \ref{prop:LD} are not centered, but they 
converge to zero as $r\to\infty$ because the rate function $J$ uniquely 
vanishes at zero. The Mac Laurin formula of order 2 of $J$ (for nonnegative
arguments) is $\tilde{J}$, and therefore we have some analogy with what happens
for $\Lambda^*$ and $\tilde{\Lambda}^*$ in Remark \ref{rem:Taylor-formula-order-2};
in fact $J$ is the restriction of $\tilde{J}$ on $[0,t]$. Furthermore, one could 
investigate if we have limits as the ones in \eqref{eq:mean-variance-formula}.
Here we take into account Theorem 2.2 in \cite{IafrateOrsingher} (see also Remark 
2.4 in \cite{IafrateOrsingher} for the mean value). Firstly, we have
$$\mathbb{E}[T_{\mu\sqrt{r},t}]=\frac{1-e^{-\frac{\mu^2r}{2}t}}{\mu^2r},$$
and therefore $\lim_{r\to\infty}\mathbb{E}[T_{\mu\sqrt{r},t}]=0$. Moreover
\begin{multline*}
\mathrm{Var}[T_{\mu\sqrt{r},t}]=\mathbb{E}[T_{\mu\sqrt{r},t}^2]-\mathbb{E}^2[T_{\mu\sqrt{r},t}]
=\frac{3}{4}\int_0^tae^{-\frac{\mu^2r}{2}a}da-\left(\frac{1-e^{-\frac{\mu^2r}{2}t}}{\mu^2r}\right)^2\\
=\frac{3}{4}\left(-\frac{2t}{\mu^2r}e^{-\frac{\mu^2r}{2}t}+\left(\frac{2}{\mu^2r}\right)^2(1-e^{-\frac{\mu^2r}{2}t})\right)
-\frac{(1-e^{-\frac{\mu^2r}{2}t})^2}{\mu^4r^2}\\
=-\frac{3t}{2\mu^2r}e^{-\frac{\mu^2r}{2}t}+\frac{3}{\mu^4r^2}(1-e^{-\frac{\mu^2r}{2}t})-\frac{(1-e^{-\frac{\mu^2r}{2}t})^2}{\mu^4r^2};
\end{multline*}
so, in order to have a finite and positive limit, we have to take a different
scaling, i.e.
\begin{equation}\label{eq:variance-limit}
\lim_{r\to\infty}r^2\mathrm{Var}[T_{\mu\sqrt{r},t}]=\frac{2}{\mu^4}
\end{equation}
(on the contrary the limit for the variance with the same scaling as in 
\eqref{eq:mean-variance-formula}, and therefore with the speed $v_r=r$ as in
Proposition \ref{prop:LD}, is equal to zero). We can also say that, as happens 
for Theorem 3.7.1 in \cite{DemboZeitouni}, Proposition \ref{prop:MD} fill the 
gap between two regimes (as $r\to\infty$). We have again a convergence to zero
for $T_{\mu\sqrt{r},t}$ (case $\gamma_r=1/r$) but, on the contrary, 
$rT_{\mu\sqrt{r},t}$ (case $\gamma_r=1$) converges weakly to the distribution 
of a nonnegative random variable $Y$ with distribution function $G$ defined by
$$G(a)=1-\frac{2}{\pi}\int_0^\infty\frac{e^{-\frac{\mu^2a}{2}(1+y^2)}}{1+y^2}dy,\ \mbox{for all}\ a\geq 0.$$
Finally we compute the variance of $Y$. We have
\begin{multline*}
\mathrm{Var}[Y]=\mathbb{E}[Y^2]-\mathbb{E}^2[Y]
=\int_0^\infty \underbrace{P(Y^2>a)}_{=P(Y>\sqrt{a})}da-\left(\int_0^\infty P(Y>a)da\right)^2\\
=\int_0^\infty\frac{2}{\pi}\int_0^\infty\frac{e^{-\frac{\mu^2\sqrt{a}}{2}(1+y^2)}}{1+y^2}dyda
-\left(\int_0^\infty\frac{2}{\pi}\int_0^\infty\frac{e^{-\frac{\mu^2a}{2}(1+y^2)}}{1+y^2}dyda\right)^2;
\end{multline*}
moreover, after some manipulations (in particular, for the second equality we consider the change of 
variable $\bar{a}=\sqrt{a}$ in the first term and, later, we put $a$ in place of $\bar{a}$), we obtain
\begin{multline*}
\mathrm{Var}[Y]=\frac{2}{\pi}\int_0^\infty\frac{1}{1+y^2}\int_0^\infty e^{-\frac{\mu^2\sqrt{a}}{2}(1+y^2)}dady
-\frac{4}{\pi^2}\left(\int_0^\infty\frac{1}{1+y^2}\int_0^\infty e^{-\frac{\mu^2a}{2}(1+y^2)}dady\right)^2\\
=\frac{2}{\pi}\int_0^\infty\frac{1}{1+y^2}\int_0^\infty e^{-\frac{\mu^2a}{2}(1+y^2)}2adady
-\frac{4}{\pi^2}\left(\int_0^\infty\frac{1}{1+y^2}\int_0^\infty e^{-\frac{\mu^2a}{2}(1+y^2)}dady\right)^2\\
=\frac{4}{\pi}\int_0^\infty\frac{1}{1+y^2}\frac{\Gamma(2)}{(\frac{\mu^2}{2}(1+y^2))^2}dy
-\frac{4}{\pi^2}\left(\int_0^\infty\frac{1}{\frac{\mu^2}{2}(1+y^2)^2}dy\right)^2\\
=\frac{16}{\pi\mu^4}\int_0^\infty\frac{1}{(1+y^2)^3}dy-\frac{16}{\pi^2\mu^4}\left(\int_0^\infty\frac{1}{(1+y^2)^2}dy\right)^2;
\end{multline*}
finally we consider the change of variable $y=\tan\theta$ in both integrals and we get
\begin{multline*}
\mathrm{Var}[Y]=\frac{16}{\pi\mu^4}\int_0^{\pi/2}\cos^4\theta d\theta-\frac{16}{\pi^2\mu^4}\left(\int_0^{\pi/2}\cos^2\theta d\theta\right)^2\\
=\frac{16}{\pi\mu^4}\underbrace{\left(\left[\frac{\cos^3\theta\sin\theta}{4}
+\frac{3}{4}\left(\frac{\theta+\sin\theta\cos\theta}{2}\right)\right]_{\theta=0}^{\theta=\pi/2}
-\frac{1}{\pi}\left(\left[\frac{\theta+\sin\theta\cos\theta}{2}\right]_{\theta=0}^{\theta=\pi/2}\right)^2\right)}_{=\frac{3\pi}{16}
-\frac{1}{\pi}\frac{\pi^2}{16}}=\frac{2}{\mu^4}.
\end{multline*}
Thus, in some sense, we have some analogy with the classical moderate deviation results, 
because the limit of rescaled variance in \eqref{eq:variance-limit} coincides with the 
variance of the weak limit $Y$ of $rT_{\mu\sqrt{r},t}$ (concerning the case $\gamma_r=1$).

\paragraph{Acknowledgements.} We thank an anonymous referee for
the careful reading of the first version of the manuscript. His/her
comments allowed us to correct two inaccuracies.

\end{document}